\documentclass[11pt]{amsart}
\usepackage[margin=1in]{geometry}
\usepackage[T1]{fontenc}
\usepackage[utf8]{inputenc}
\usepackage{microtype}
\usepackage{mathtools,amssymb,amsthm}
\usepackage{tikz}
\usetikzlibrary{arrows.meta}
\usepackage[hidelinks]{hyperref}
\theoremstyle{plain}
\newtheorem{theorem}{Theorem}[section]
\newtheorem{proposition}[theorem]{Proposition}
\newtheorem{lemma}[theorem]{Lemma}
\newtheorem{corollary}[theorem]{Corollary}
\theoremstyle{definition}
\newtheorem{definition}[theorem]{Definition}
\theoremstyle{remark}
\newtheorem{remark}[theorem]{Remark}

\newcommand{\R}{\mathbb R}

\newcommand{\cM}{\mathcal M}
\newcommand{\cN}{\mathcal N}

\newcommand{\inner}[2]{\langle #1,#2\rangle}

\title[On the trapping of ray families by mirrors]{On the trapping of ray families by mirrors}
\author{Casey O'Malley}
\address{University of California, Riverside, CA 92521, USA}
\email{comal002@ucr.edu}
\date{}

\subjclass[2020]{Primary 37D10; Secondary 37D50, 53D12, 78A05}
\keywords{Billiards, geometric optics, trapped rays, normally hyperbolic invariant manifolds, normal congruences}

\begin{document}

\begin{abstract}
We settle two questions of Serge Tabachnikov regarding the trapping of light rays by mirrors. We show (i) that the largest dimension of a trapped family of rays in $\mathbb{R}^n$ is $2n-3$, and (ii) that in $\mathbb{R}^3$ one can trap a 2-parameter family of rays that is not locally normal to any smooth surface. We first establish that no $2n-2$-parameter family can be trapped by showing that no open subset of $L_n$ can be trapped due to Poincaré's recurrence theorem, yielding an upper bound of $2n-3$. We then show that a system of mirrors traps a $2n-3$ parameter family of rays by noting that a compact subset of the set of states of rays in the central "waist" of the trap is a normally hyperbolic invariant manifold, making it amenable to a stable manifold theorem for NHIMs, which implies that the set of rays asymptotic to the waist of the trap has dimension $2n-3$. We then truncate our mirror system, which permits the entrance of rays from arbitrarily far outside of the trap, attaining the upper bound $2n-3$ and resolving the first question. We then show that in $\mathbb{R}^3$, there exists within the trapped family of rays a 2-parameter subfamily on which the canonical symplectic form on the space of oriented lines is nowhere vanishing, implying that the subfamily is not locally normal to any smooth surface, which settles the second question.
\end{abstract}
\maketitle
\section{Introduction}
The trapping of light rays by systems of mirrors is a subject of dynamical systems that originates with John Connett's 1992 question of whether light rays could be trapped within a system of mirrors, which was answered affirmatively with a construction of Roberto Peirone in 1994. In a 2022 survey of open problems in billiards and geometric optics~\cite{BFGLPT}, Serge Tabachnikov asks the following questions:

\medskip\noindent\textbf{Question 1.} \emph{What is the largest dimension of a family of rays in $\R^n$ that can be trapped?}

\medskip\noindent\textbf{Question 2.} \emph{In $\R^3$, can one trap a non-normal 2-parameter family of rays?}

\medskip

We naturally say a ray is trapped if it enters from arbitrarily far (infinitely far, if you wish) outside of some bounded region, i.e. the trap, into the trap, with all of its forward reflections inside of the trap, and we assume all reflections to be nonsingular. Note that $L_n$, the space of oriented lines in $\R^n$, has dimension $2n-2$. We introduce two theorems:

\begin{theorem}\label{thm:main}
For every $n\ge2$:
\begin{enumerate}
    \item No nonempty open subset of $L_n$ is trapped.
    \item There exists a trap that traps a family of rays of dimension $2n-3$ in $L_n$.
\end{enumerate}
Thus the largest possible dimension of a trapped family of rays is $2n-3$.
\end{theorem}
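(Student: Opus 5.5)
For part (1) I will use Poincaré recurrence applied to the time-reversed billiard map. Suppose an open set $U\subset L_n$ of incoming rays is trapped by a mirror system contained in a ball $B$. Represent each ray by its state (point, unit direction) at its first hit with the mirrors, giving an open set $W$ of states in the phase space of the billiard map. The billiard map $T$ preserves the canonical symplectic volume on the space of oriented lines. Because every trapped ray stays in $B$ for all forward time, the forward orbits of $W$ lie in the set $X$ of states hitting the mirrors inside $B$, and $X$ has finite measure. Define $K=\bigcup_{k\ge0}T^k(W)$. This set is forward invariant and has finite measure, and measure preservation forces $\mu(T^{-1}K\setminus K)=0$.

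Now apply recurrence to $T^{-1}$ (the reflection law is time-reversible) restricted to $K$. Almost every point of $W$ returns to $W$ infinitely often under backward iteration, so its backward orbit must also remain among the mirrors. However, each ray in $W$ came from infinity, so its backward orbit leaves $B$ after finitely many reflections. That is a contradiction, since $W$ is open and therefore has positive measure. To make this rigorous I will handle the singular reflections, which form a measure-zero set, and I will work on the space of oriented lines with the measure $\omega^{n-1}$ that $T$ preserves. This gives the bound: every trapped family has empty interior, so its dimension is at most $2n-3$.

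For part (2) I will build a trap with a hyperbolic "waist" and count dimensions with the stable manifold theorem for normally hyperbolic invariant manifolds.

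1. Take two coaxial concave mirrors facing each other, both rotationally symmetric about a common hyperplane direction. The simplest model is a family of parallel segments bouncing between two parallel planar pieces around a saddle-type neck, chosen so that the set $N$ of states of rays bouncing back and forth orthogonally across the waist forms a $(2n-4)$-dimensional invariant manifold.
   - Concretely, $N$ consists of rays perpendicular to a hyperplane at the waist, parametrized by their foot point in an $(n-2)$-dimensional waist sphere or region, with the direction also constrained. I will adjust the parametrization so that $\dim N=2n-4$.
2. On $N$ the map is an isometric (neutral) dynamics, while transversally there is one expanding and one contracting direction, coming from the hyperbolic saddle geometry of the waist, as in a two-dimensional hyperbolic periodic orbit. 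A compact piece of $N$ is therefore a NHIM.
3. The stable manifold $W^s(N)$ then has dimension $(2n-4)+1=2n-3$. It consists of rays asymptotic to the waist, and these never escape in forward time.

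Next I will truncate the mirrors by cutting an opening far from the waist. Flowing $W^s(N)$ backward, an open portion of it exits through the opening to infinity, and this portion is still $(2n-3)$-dimensional. These are rays entering from arbitrarily far away that are trapped.

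The main obstacle is the truncation step. I must check that backward orbits in $W^s(N)$ genuinely escape through the opening rather than staying near $N$. I must also check that the cut does not destroy the local NHIM structure, and that forward orbits avoid the removed part of the mirrors. My plan for this is to make the opening lie in the region where the unstable and stable branches are transverse: the backward orbit of a stable point follows the unstable-like directions outward, reaching the opening in finitely many steps for an open set of points. Persistence of NHIMs under small perturbation also handles the requirement that the construction be robust.
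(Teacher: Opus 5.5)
\textbf{There is a genuine gap in part (2).} You never pin down a mirror system and show that it carries a $(2n-4)$-dimensional normally hyperbolic invariant manifold, and that is the heart of the construction. The family you describe --- orthogonal bounces across the waist, parametrized by a foot point on an $(n-2)$-dimensional waist sphere, with the direction ``also constrained'' --- is only $(n-2)$-dimensional. No choice of parametrization raises its dimension to $2n-4$. The parallel-planar-pieces version does have an $(n-1)$-dimensional family of foot points, but its transverse dynamics is parabolic, not hyperbolic.

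The missing idea is to take $N$ to be the \emph{entire} phase space of the billiard inside the waist cross-section. Consider the hourglass tube $|y|=\rho(x)$ with $\rho'(0)=0$ and $\rho''(0)=\beta>0$. The normal at $x=0$ has no $x$-component, so the hyperplane $\{x=0\}$ is invariant, and the dynamics there is the billiard in an $(n-1)$-ball of radius $r$. Its states $(e,u)\in S^{n-2}\times S^{n-2}$ with $c=-\inner{e}{u}\in I\Subset(0,1)$ form the $(2n-4)$-dimensional set $\cN_I$.

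Normal hyperbolicity must then be proved rather than asserted by analogy with a planar hyperbolic orbit. Along $\cN_I$ the $(x,\xi)$-linearization is $A(c)$ with $\det A(c)=1$ and $\operatorname{tr}A(c)=2+4r\beta c^2>2$, uniformly for $c\in I$; this is exactly where $\rho''(0)>0$ is used. The tangential dynamics is a $c$-dependent rotation, not an isometry, and its derivatives grow only linearly, so the exponential normal rates dominate.

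\textbf{Your truncation step is also unresolved, and cutting the opening far from the waist is the wrong choice.} The local NHIM theorem gives no control there, either over where backward orbits go or over whether forward orbits of entering rays avoid the removed mirror. The paper instead cuts \emph{near} the waist, at $|x|=a$ with $a$ small. This lies inside the region where $W^s_{\mathrm{loc}}(\cN_I)$ is the graph $\xi=-\alpha(c)x+O(x^2)$ and $0<x(fz)<x(z)$ on its $x>0$ part. Then $S_a=\{z\in W^{s,+}_{\mathrm{loc}}(\cN_I):x(z)<a<x(f^{-1}z)\}$ is open in the $(2n-3)$-dimensional stable manifold. For $z\in S_a$, the chord from $f^{-1}z$ to $z$ crosses $x=a$ inside the aperture, so after truncation the incoming line arrives from $x=+\infty$. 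Since $x$ is affine between collisions, the forward trajectory stays in $0<x<a$. No persistence argument is needed, because truncation does not change the map near $\cN_I$.

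\textbf{Part (1) has a smaller, fixable slip.} You cannot apply recurrence to $T^{-1}$ on $K$, because states of $W$ have no previous collision, so $T^{-1}$ is undefined on a positive-measure part of $K$. Argue instead that $T(K)\subset K$ and $\mu(T(K))=\mu(K)<\infty$ force $\mu(K\setminus T(K))=0$, whereas $W\subset K\setminus T(K)$ has positive measure. The paper reaches the same contradiction by closing $\partial B_R$ into a mirror and applying recurrence to the entry states.
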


\begin{theorem}\label{thm:nonnormal}
In $\R^3$, there exists a trapped non-normal 2-parameter family of rays.
\end{theorem}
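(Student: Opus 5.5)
The plan is to prove Theorem~\ref{thm:nonnormal} inside the trapped family built for Theorem~\ref{thm:main}(2) with $n=3$. There $2n-3=3$, so the trapped set $W\subset L_3$ is a $3$-dimensional submanifold: the stable manifold of the normally hyperbolic invariant manifold formed by states of rays in the waist of the trap, followed back through the truncated mirrors to arbitrarily far away. Let $\omega$ be the canonical symplectic form on the $4$-dimensional space $L_3$. A $2$-parameter family $S\subset L_3$ is locally normal to a smooth surface exactly when it is Lagrangian, i.e.\ $\omega|_S\equiv0$ (Malus--Dupin). So it is enough to find a $2$-dimensional surface $S\subset W$ on which $\omega|_S$ vanishes nowhere. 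Every ray of $S$ is then trapped, and no open piece of $S$ is normal to any surface.

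\textbf{Step 1 (linear algebra).} Fix a point $\ell\in W$, so $T_\ell W$ is a $3$-dimensional subspace of the symplectic $4$-space $T_\ell L_3$. Its symplectic complement is $1$-dimensional, and since a hyperplane in a symplectic space is coisotropic, that complement lies inside $T_\ell W$. Hence $\omega|_{T_\ell W}$ has rank exactly $2$ and a $1$-dimensional kernel $K_\ell\subset T_\ell W$ (the characteristic line field). A $2$-plane $P\subset T_\ell W$ satisfies $\omega|_P\neq0$ iff $P\not\supset K_\ell$. Thus $W$ carries a smooth characteristic line field $K$, and I need a $2$-dimensional submanifold of $W$ that is nowhere tangent to $K$.

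\textbf{Step 2 (construction of $S$).} Near any point of $W$, flow-box coordinates $(x_1,x_2,x_3)$ for $K$ make $K=\partial/\partial x_3$. The surface $S=\{x_3=0\}$ is transverse to $K$ everywhere. So every tangent plane of $S$ is a $2$-plane in $T_\ell W$ not containing $K_\ell$, and $\omega|_S$ is nonvanishing. Shrinking $S$ to a small disc keeps it inside $W$, so all its rays stay trapped. The definition of trapping is pointwise in rays and $W$ was already shown to be trapped, so no further dynamics is needed. If one wants an explicit family, the same holds for any smooth graph over two directions complementary to $K$ within $W$.

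\textbf{Main obstacle.} The hardest step is making sure $W$ really is a smooth embedded $3$-dimensional submanifold of $L_3$ near the chosen point, at least $C^1$, so that $K$ is a continuous line field and transversality makes sense. This regularity should come from the stable manifold theorem for NHIMs used in Theorem~\ref{thm:main}(2), which gives $C^r$ stable manifolds for smooth mirrors and enough normal contraction. Pulling back through finitely many nonsingular reflections (smooth symplectomorphisms of $L_3$) preserves this regularity.

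A second concern is the identification of normality with the Lagrangian condition for families that are only local. This is classical: a local Lagrangian section of $T^*S^2$ is locally $d$ of a function, and that function produces the wavefronts. Nowhere vanishing of $\omega|_S$ rules out normality on every open subset. If rank considerations seemed doubtful, I would fall back on checking $\omega|_W\neq0$ directly in the waist, where $W$ is explicit. But Step 1 shows $\omega|_W$ has rank $2$ automatically, so this should not be needed.
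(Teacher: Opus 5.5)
Your proof is correct, but it does not follow the paper's proof of Proposition~\ref{prop:nonnormal}. It is essentially the alternative given in the Remark after that proposition. There $\Psi(S_a)$ is put in the local form $\{p_2=0\}$ with $\omega=dp_1\wedge dq_1+dp_2\wedge dq_2$, and one takes the subfamily $\{p_2=q_2=0\}$. Your Step~1 (a hyperplane in a symplectic $4$-space is coisotropic, so $\omega|_{TW}$ has rank $2$ with kernel $K$), combined with transversality to $K$, is the pointwise version of that normal form. It needs only $C^1$ regularity of $W$ and continuity, not a relative Darboux theorem.

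The paper's main proof is concrete instead:
\begin{itemize}
\item it parametrizes the incoming lines at the waist by $(\theta,c)$;
\item it computes $\omega(\partial_\theta,\partial_c)=-rc/\sqrt{1-c^2}\neq0$ there;
\item it carries nonvanishing by continuity to the slice $j_\eta(P)$ of the stable manifold at height $x=\eta=\vartheta a$;
\item it rechecks that these rays enter through the aperture, using $\lambda_+(c)\vartheta>1$.
\end{itemize}

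Your route gains generality and economy. Any trapped $3$-parameter family in $L_3$ (indeed any $C^1$ hypersurface of $L_3$) contains a non-normal $2$-parameter subfamily. No computation is needed, and no further dynamics, because subfamilies of a trapped family are trapped by definition. The paper's route gains an explicit, natural family with an explicit value of $\omega$.

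The two routes in fact coincide. The billiard map preserves $\omega$, so for $v$ tangent to a stable fiber and $w\in TW$ we get $\omega(v,w)=\omega(Df^kv,Df^kw)\to0$, since the contraction $\lambda_s^k$ beats the at most linear tangential growth. Hence your characteristic line $K$ is exactly the stable direction. The paper's $j_\eta(P)$ is a cross-section of the stable fibration, so it is a surface transverse to $K$. Its explicit computation of $\omega$ verifies by hand what your linear algebra gives for free.
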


We begin by establishing an upper bound of $2n-3$ by showing that a trapped open subset of $L_n$ is impossible by Poincaré's recurrence theorem, proving (1) of Theorem~\ref{thm:main}. To attain the bound, we then show that an inward-facing mirror formed by a tube of revolution
\[
\rho\in C^\infty(\R,(0,\infty)),
\quad
\rho(0)>0,
\quad
\rho'(0)=0,
\quad
\rho''(0)>0,
\]
truncated to $|y|=\rho(x)$ (with $(x,y)\in\R\times\R^{n-1}$) at $|x|=a$ traps a $(2n-3)$-parameter family of rays. The simplest example of our trap resembles an open-ended hourglass (Figure~\ref{fig:hourglass}), an open tube with a thin central waist toward which some rays converge and become trapped (i.e. they reflect off of some point inside of the trap, and subsequent reflections get closer and closer to the waist). Other examples include catenoids. After showing that the linearization of the billiard map in the directions transverse to the waist is hyperbolic, and that a compact subset of the set of states of rays in the waist is a normally hyperbolic invariant manifold, we apply a stable manifold theorem for normally hyperbolic invariant manifolds to show that a family is trapped.

We then apply our result to show that a non-normal 2-parameter family can be trapped in $\R^3$. We note that a differential form on a 2-parameter family is zero if the family is normal to some surface, and then we show that the form is nonzero for the set of rays within the waist of our trap.

\section{Ray space and trapping}\label{sec:setup}

We recall that the space of oriented lines $L_n=\{(u,p):u\in S^{n-1},\ p\in u^\perp\}$ has $\dim L_n=2n-2,$ and give two definitions.

\begin{definition}
A \emph{trap} is a finite collection of compact embedded $C^\infty$ hypersurfaces (the mirrors) in $\R^n$. Rays are reflected specularly at interior points of the mirrors. We call a trajectory \emph{nonsingular} if it has no tangential collisions with the mirrors, collisions with the boundaries of the mirrors (each mirror may have a boundary), multiple collisions, or accumulation of collisions in finite time.
\end{definition}

\begin{definition}\label{def:trapped}
Let $F\subset L_n$ be a smooth embedded submanifold. We call $F$ a \emph{trapped family of rays} if there is some ball $B_R$ containing a trap in its interior such that the billiard trajectories entering $B_R$ along any $\ell\in F$ from infinity remain within it for all subsequent time and are nonsingular.
\end{definition}

\section{upper bound of the dimension of a trapped family}\label{sec:upper}

\begin{proposition}\label{prop:no-open}
No nonempty open subset of $L_n$ is trapped.
\end{proposition}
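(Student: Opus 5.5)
We argue by contradiction, using volume preservation of the billiard dynamics together with Poincaré recurrence applied to the time-reversed flow. Suppose $U\subset L_n$ is a nonempty open set that is trapped by some trap inside a ball $B_R$. The space $L_n\cong T^*S^{n-1}$ carries its canonical symplectic form $\omega$, and the Liouville measure $\mu=\omega^{n-1}/(n-1)!$ is invariant under specular reflection in any smooth mirror. This is the classical fact that billiard and optical reflection maps are symplectic on the space of oriented lines. It extends to the composite "from ray to next ray" map wherever that map is defined and nonsingular.

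The first step is to set up a measure-preserving system on a set of finite measure. Let $K\subset L_n$ be the set of oriented lines meeting $\overline{B_R}$; it is compact, so $\mu(K)<\infty$. Shrinking $U$, we may assume $U$ is relatively compact and contained in $K$, since only rays hitting the trap are relevant. Consider the map $T$ sending an oriented line (the current segment of a trajectory) to the line after the next reflection. By the nonsingularity assumption, $T$ is defined on all forward iterates of lines in $U$; these iterates all lie in $K$ because the trajectory stays in $B_R$.

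Consider $W=\bigcup_{k\ge0}T^k(U)\subset K$. Since $T$ is injective, with inverse given by reflection backwards, and measure-preserving, the sets $T^k(U)$ all have measure $\mu(U)>0$. The key observation is that $U$ consists of lines that came \emph{from infinity}: for each $\ell\in U$, the backward trajectory has no reflection before $\ell$ enters the trap. So $T^{-1}$ is undefined on $U$, meaning the segment $\ell$ is not the image of any earlier segment. Hence $T^k(U)\cap U=\emptyset$ for $k\ge1$. More generally, $T^j(U)\cap T^k(U)=\emptyset$ for $j\ne k$: otherwise, applying $T^{-\min(j,k)}$ via injectivity along the same trajectories would give $T^{|j-k|}(U)\cap U\ne\emptyset$. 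The sets $T^k(U)$ are thus pairwise disjoint subsets of $K$, each of measure $\mu(U)>0$. This contradicts $\mu(K)<\infty$, which is exactly the Poincaré recurrence / pigeonhole argument.

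The main obstacle is making the invariance of $\mu$ rigorous for the partially defined map $T$. We will restrict to the open set of nonsingular lines, on which $T$ is a local diffeomorphism. On it, the standard computation, via generating functions or the fact that reflection in a hypersurface is a symplectomorphism of $T^*S^{n-1}$ (Melrose/Tabachnikov), gives $T^*\omega=\omega$. Injectivity follows from reversibility of specular reflection. Measurability of $T^k(U)$ is immediate since each is the image of an open set under a local diffeomorphism. A second subtlety is that only rays meeting a mirror are reflected. For $\ell\in U$ which never hits a mirror, the ray simply exits $B_R$ and is not trapped, so every $\ell\in U$ indeed has a first reflection and $T$ is defined on it.
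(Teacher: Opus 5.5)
Your pigeonhole (wandering-set) strategy is the right kind of argument; it is the mechanism behind Poincaré recurrence, which the paper invokes after making $\partial B_R$ a mirror. As written, however, it mixes up two spaces, and the step that produces the contradiction fails.

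The map $T$, its injectivity, and the key claim $T^k(U)\cap U=\emptyset$ only make sense for trajectory \emph{segments}, i.e.\ states carrying a base point. A bare oriented line has no ``next'' reflection, and in $L_n$ every line comes from infinity. Read in $L_n$, the claims fail:
\begin{itemize}
\item If the $k$-th segment $[Q_1,Q_2]$ of some trapped trajectory lies on a line $m$, then $m$ traced from $-\infty$ simply meets the mirrors at or before $Q_1$. Nothing prevents $m\in U$.
\item Two different segments, of one trajectory or of several, can lie on the same line. So the projection from segments to lines is not injective.
\end{itemize}
Hence neither $\mu(T^k(U))=\mu(U)$ nor pairwise disjointness holds for the images in $L_n$. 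The finiteness $\mu(K)<\infty$ for $K\subset L_n$ therefore gives no contradiction.

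What is missing is a finiteness statement on the space where disjointness actually holds. That space is the set of segments beginning at a reflection point, i.e.\ the collision space of the mirrors. Its measure is the pulled-back Liouville measure, which up to normalization is $|\langle v,\nu\rangle|\,dA\,dv$.
\begin{itemize}
\item Its total mass is finite because the mirrors are compact: it is at most a constant times their total area.
\item Equivalently, by Crofton's formula, $\int_{L_n}\#(\ell\cap\text{mirrors})\,d\mu(\ell)<\infty$. This bounds the number of post-reflection segments lying on a given line.
\end{itemize}
With this in place, the sets $T^k(U)$, $k\ge1$, are pairwise disjoint sets of measure $\mu(U)>0$ in a space of finite measure, and your argument closes.

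The paper sidesteps the issue by turning $\partial B_R$ into a mirror. All the dynamics then lives in the compact collision space of a closed billiard. It then applies recurrence to the positive-measure set of entry states on $\partial B_R$.
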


\begin{proof}
Suppose that some nonempty open set of rays $U\subset L_n$ is trapped in $B_R$, and let $E$ be the set of entry states of rays in $U$ into $B_R$ along $\partial B_R$. Then make $\partial B_R$ an inward-facing mirror, so that the trap is closed, and note that the billiard inside preserves $\omega=d(p\cdot du)$, and also $\omega^{n-1}$, so by Poincaré's recurrence theorem, there is a trajectory of some state in $E$ that returns arbitrarily close to $E$, i.e. collides with $\partial B_R$, which implies that $U$ was never trapped, since this constitutes a ray that stretches outside of the trap in the system where $\partial B_R$ is not a mirror, a contradiction. A very similar argument is given in \cite[p.~26]{TabBook}.
\end{proof}

\begin{corollary}\label{cor:upper}
Every trapped family $F\subset L_n$ has
$
\dim F\le2n-3.
$
\end{corollary}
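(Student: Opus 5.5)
The plan is to deduce Corollary~\ref{cor:upper} directly from Proposition~\ref{prop:no-open} by a dimension count. By Definition~\ref{def:trapped}, a trapped family $F$ is a smooth embedded submanifold of $L_n$, and $\dim L_n = 2n-2$, so $\dim F \le 2n-2$ automatically. It therefore suffices to rule out $\dim F = 2n-2$.

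Suppose $\dim F = 2n-2$. An embedded submanifold of a manifold of the same dimension is an open subset. To see this, apply the inverse function theorem to the inclusion $F\hookrightarrow L_n$: it is an injective immersion between manifolds of equal dimension, hence a local diffeomorphism, hence an open map. Thus $F$ is a nonempty open subset of $L_n$. By assumption, $F$ is trapped: there is a ball $B_R$ in which every ray of $F$ entering from infinity stays for all time and is nonsingular. This contradicts Proposition~\ref{prop:no-open}. Hence $\dim F \le 2n-3$.

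One subtlety is whether $F$ might have components of different dimensions. Each component of top dimension would again be a nonempty open subset of $L_n$. Any subset of a trapped family is still trapped, with the same ball $B_R$. So such a component would also contradict Proposition~\ref{prop:no-open}. There is no real obstacle here: the only nontrivial input is Proposition~\ref{prop:no-open} itself, which rests on the invariance of $\omega^{n-1}$ and Poincaré recurrence.
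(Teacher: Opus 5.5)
Your proposal is correct and matches the paper's proof: a $(2n-2)$-dimensional embedded submanifold of $L_n$ is open, so Proposition~\ref{prop:no-open} rules it out. Your inverse-function-theorem justification and the remark about components of top dimension are harmless refinements that the paper leaves implicit.
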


\begin{proof}
Any embedded submanifold of dimension $2n-2$ is open in $L_n$, and thus the existence of such a trapped submanifold is forbidden by Proposition~\ref{prop:no-open}.
\end{proof}

\section{Trapping a $2n-3$ parameter family of rays}\label{sec:waist}
Here we describe the trapping of rays for $n\geq3$. Our trap, the simplest construction of which is given in Figure 1, resembles an hourglass with open ends that narrows to a central waist. It is formed by a surface of revolution 
\begin{equation}\label{eq:profile}
\rho\in C^\infty(\R,(0,\infty)),
\quad
r:=\rho(0)>0,
\quad
\rho'(0)=0,
\quad
\beta:=\rho''(0)>0.
\end{equation}
Thus
\begin{equation}\label{eq:profile-expansion}
\rho(x)=r+\frac{\beta}{2}x^2+O(x^3),
\quad
\rho'(x)=\beta x+O(x^2).
\end{equation}

Set
\[
\cM_\infty=\{(x,y):|y|=\rho(x)\},
\]
and, for $a>0$,
\[
\cM_a=\{(x,y):|y|=\rho(x),\ |x|\le a\}.
\]        
Let $f$ denote the billiard map for $\cM_\infty$.

\begin{figure}[ht]
  \centering
  \includegraphics[width=0.8\linewidth]{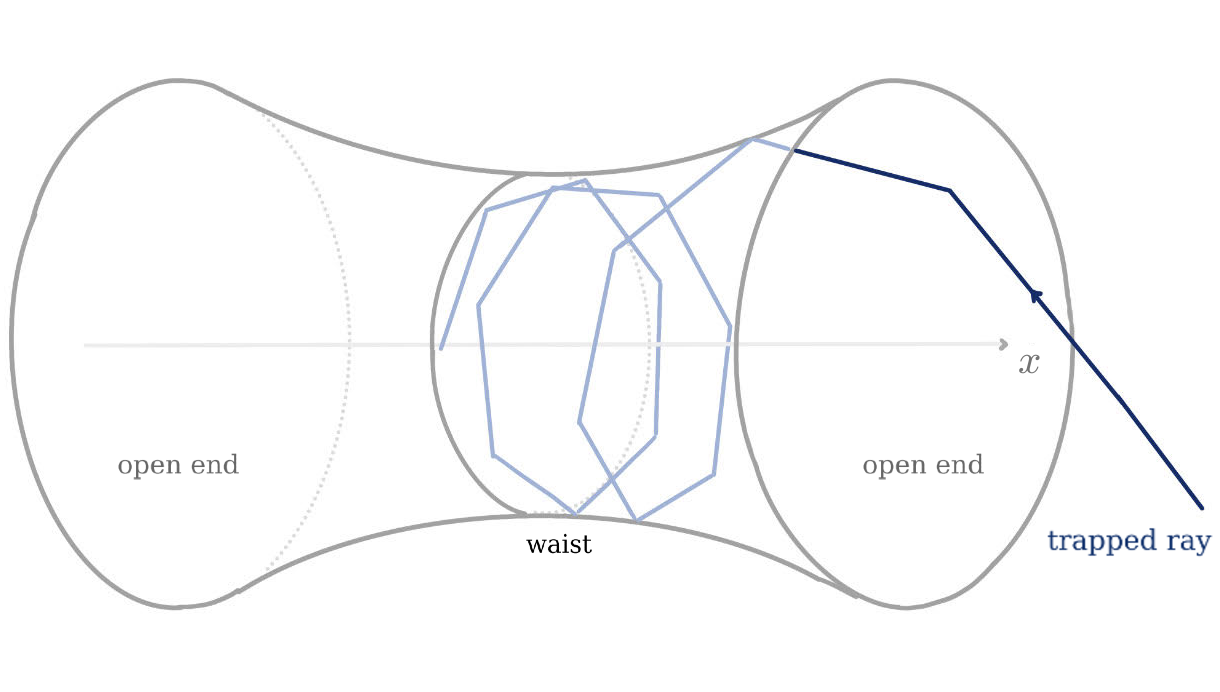}
  \caption{A ray trapped in 
  $\rho(x)=1+\frac{\beta}{2}x^2$.}
  \label{fig:hourglass}
\end{figure}

\subsection{The central billiard}
Since $\rho'(0)=0$, the normal to $\cM_\infty$ at $x=0$ has no $x$-component, so $H=\{x=0\}$ is invariant under billiard flow, and the billiard flow in $H$ is the billiard in a ball of radius $r$, where collisions are given by
\[
e\in S^{n-2},
\quad
v=(0,u),\quad u\in S^{n-2},
\quad
\inner{e}{u}<0
\]
where $re$ is the impact point on the mirror of a ray. Define $c=-\inner{e}{u}\in(0,1)$ and note both that the distance between collisions is $2rc$ and that $c$ is conserved.
Define the set
\[
\cN=
\{x=0,\ \xi=0,\ e,u\in S^{n-2},\ 0<-\inner{e}{u}<1\},
\]
where $\xi$ is velocity in $x$, and note that the invariance of $H$ under billiard flow and the conservation of $c$ imply the invariance of $\cN$ under $f$. For a compact interval $I\Subset(0,1)$ let $\cN_I=\{z\in\cN:c(z)\in I\}.$ Then $\dim\cN=\dim\cN_I=2n-4.$ Write
\[
u=-ce+s b,
\quad
s=\sqrt{1-c^2},
\quad
b\perp e,
\quad
|b|=1.
\]
At the next collision,
\[
e_+=(1-2c^2)e+2cs\,b,\quad
b_+=-2cs\,e+(1-2c^2)b,\quad
c_+=c.
\]
Thus the map $f|_\cN$ rotates $(e,b)$ through some angle $\psi(c)$ depending only on $c$. For fixed c, $D_{(e,b)}f^{k}$ is uniformly bounded in $k$, while differentiating with respect to $c$ produces at most linear growth in $|k|$. Consequently, for some constant $C_I$,
\begin{equation}\label{eq:tangent-growth}
\|D(f^k|_{\cN_I})\|+\|D(f^{-k}|_{\cN_I})\|
\le C_I(1+|k|),
\quad k\in\mathbb Z.
\end{equation}

\subsection{The linearization of $f$}
Near $\cN_I$, write an impact point and outgoing velocity as
\[
q=(x,\rho(x)e),
\quad
v=(\xi,w).
\]

\begin{lemma}\label{lem:linearization}
At a point of $\cN_I$ with parameter $c$,
\[
Df|_{(x,\xi)}=A(c)=
\begin{pmatrix}
1&2rc\\
2\beta c&1+4r\beta c^2
\end{pmatrix},
\]
with eigenvalues 
\[
\lambda_\pm(c)
=1+2r\beta c^2
 \pm2c\sqrt{r\beta+r^2\beta^2c^2},
\]
satisfying
\[
0<\lambda_-(c)<1<\lambda_+(c),
\quad
\lambda_-(c)\lambda_+(c)=1,
\]
with stable eigendirection 
\[
\xi=-\alpha(c)x,
\quad
\alpha(c)=\sqrt{\frac{\beta}{r}+\beta^2c^2}-\beta c>0.
\]
\end{lemma}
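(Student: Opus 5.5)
The plan is to compute $Df$ directly in the coordinates $(x,\xi)$ transverse to $H$, working to first order in $(x,\xi)$ about a point of $\cN_I$. The transverse block is well defined because $H$ is invariant: $T\cN$ is $Df$-invariant, so $Df$ descends to the quotient, whose coordinates are $(x,\xi)$. I will split $f$ into a free flight followed by a specular reflection, linearize each piece, and compose. Terms coming from perturbing $(e,b,c)$ only feed into the $\cN$-directions or appear at second order, so I will track only the coefficients of $x$ and $\xi$ in $(x_+,\xi_+)$.

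\emph{Free flight.} Since $\rho(x)=r+O(x^2)$, the chord length between consecutive collisions changes only at second order when $x$ is perturbed. It therefore equals $2rc+O(|x|^2+|\xi|^2)$. I also need the perturbation of the $y$-direction of $v$ to shift the chord length only to first order in $\xi$; the resulting change in $x_+$ is then of order $\xi^2$. Since $|v|=1$, this gives
\[
x_+ = x + 2rc\,\xi + \text{h.o.t.}
\]

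\emph{Reflection.} At the new impact point $(x_+,\rho(x_+)e_+)$ the outward normal is proportional to $(-\rho'(x_+),e_+)$, that is $n=(-\beta x_+,e_+)+\text{h.o.t.}$ by \eqref{eq:profile-expansion}. The incoming velocity satisfies $\inner{v}{n}=c+\text{h.o.t.}$, because its $y$-part makes the conserved angle with $e_+$. Specular reflection $v\mapsto v-2\inner{v}{n}n$ then gives
\[
\xi_+ = \xi + 2c\beta x_+ = 2\beta c\,x + (1+4r\beta c^2)\xi + \text{h.o.t.}
\]
Composing the two steps yields the matrix $A(c)$.

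The step I expect to be the main obstacle is justifying that the neglected terms really are of second order. Three points need care: the dependence of the chord length on $x$; the normalization of $n$, which is $1+O(x^2)$; and the sign conventions that make $\inner{v}{n}=+c$ for an outgoing/incoming pair. I would handle these by Taylor expansion using $\rho'(0)=0$.

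The spectral claims are then routine algebra. We have $\det A=1+4r\beta c^2-4r\beta c^2=1$, which gives $\lambda_-\lambda_+=1$. We also have $\operatorname{tr}A=2+4r\beta c^2>2$, so the eigenvalues are real, positive and distinct. Solving $\lambda^2-\operatorname{tr}A\,\lambda+1=0$ gives the stated $\lambda_\pm$, and positivity of $\lambda_-$ follows from $\lambda_-=1/\lambda_+$. For the stable direction, the first row of $(A-\lambda_-)(x,\xi)^T=0$ gives
\[
\xi=\frac{\lambda_- -1}{2rc}\,x=\bigl(\beta c-\sqrt{\beta/r+\beta^2c^2}\bigr)x=-\alpha(c)x,
\]
and $\alpha(c)>0$ since $\sqrt{\beta/r+\beta^2c^2}>\beta c$.
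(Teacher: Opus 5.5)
Your proposal is correct and follows essentially the same route as the paper: you get $x_1=x+2rc\,\xi$ because the flight time is $2rc$ to first order in $(x,\xi)$, reflect off the normal $(-\beta x_1,e_1)$ using $\inner{v^-}{\nu_1}=c$, compose the two steps, and read off the spectrum from $\det A(c)=1$ and $\operatorname{tr}A(c)>2$. The one difference is that you only compute the map induced on the quotient by $T\cN$; the paper also checks $D_{(x,\xi)}e_1=0$ and $D_{(x,\xi)}w_1=0$, so the $(x,\xi)$-plane itself is $Df$-invariant and the eigendirections of $A(c)$ are genuine subbundles $E^s,E^u$, as used in Proposition~\ref{prop:stable-manifold}. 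Your version gives the same eigenvalues and the same relation $\xi=-\alpha(c)x$, but that later use would need this extra check or a standard lifting argument.
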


\begin{proof}
For variations in $x$ and $\xi$, take $w=\sqrt{1-\xi^2}\,u$. The time to the next collision $t$ satisfies
\[
|\rho(x)e+t w|^2=\rho(x+t\xi)^2.
\]
At $(x,\xi,t)=(0,0,2rc)$,
\[
D_{(x,\xi)}t=0,
\quad
x_1=x+2rc\,\xi+O\bigl((|x|+|\xi|)^2\bigr).
\]
Moreover,
\[
e_1=\frac{\rho(x)e+t w}{\rho(x_1)},
\quad
D_{(x,\xi)}e_1=0.
\]
By \eqref{eq:profile-expansion},
\[
\nu(x,e)
=
\frac{(-\rho'(x),e)}{\sqrt{1+\rho'(x)^2}}
=(-\beta x,e)+O(x^2).
\]
By reflection $D_{(x,\xi)}w_1=0$, so the plane $(x,\xi)$ is invariant under $Df$. At the next impact, $\inner{v^-}{\nu_1}=c+O(|x|+|\xi|),$ and $(\nu_1)_x=-\beta x_1+O(x_1^2).$ Hence
\begin{align*}
\xi_1
&=\xi-2\inner{v^-}{\nu_1}(\nu_1)_x=\xi+2\beta c\,x_1+O\bigl((|x|+|\xi|)^2\bigr)
=2\beta c\,x+(1+4r\beta c^2)\xi
  +O\bigl((|x|+|\xi|)^2\bigr).
\end{align*}
Thus $Df|_{(x,\xi)}=A(c)$. The stable eigendirections and eigenvalues of $A(c)$ follow simply since  
\[
\det A(c)=1,
\quad
\operatorname{tr}A(c)=2+4r\beta c^2>2.
\]
\end{proof}

\section{Stable manifolds and trapping rays}\label{sec:stable}

\begin{proposition}\label{prop:stable-manifold}
For every compact interval $I\Subset(0,1)$, $\cN_I$ is a normally hyperbolic invariant manifold, and is contained in a stable manifold $W^s_{\mathrm{loc}}(\cN_I)$ of dimension $2n-3$, which has a stable direction in $(x,\xi)$ along which rays converge to $\cN_I$. For $z\in W^s_{\mathrm{loc}}(\cN_I)$, we write $c(z)$ for the $c$ of the orbit in $\cN_I$ that $z$ converges to. On the submanifold of $W^s_{\mathrm{loc}}(\cN_I)$ where $x>0$,
\begin{equation}\label{eq:stable-graph}
\xi=-\alpha(c)x+O(x^2),
\end{equation}
uniformly over $\cN_I$.
\end{proposition}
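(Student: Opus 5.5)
We will verify the hypotheses of the Fenichel--Hirsch--Pugh--Shub theorem for normally hyperbolic invariant manifolds and then read off \eqref{eq:stable-graph} from the stable eigendirection in Lemma~\ref{lem:linearization}.

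\emph{Invariant splitting.} At a point $z\in\cN_I$, split $T_z$ of the phase space (dimension $2n-2$) as $T\cN_I\oplus E^s\oplus E^u$. Here $E^{s}$ and $E^{u}$ are the eigenlines of $A(c)$ inside the $(x,\xi)$-plane, and $\dim\cN_I=2n-4$. The proof of Lemma~\ref{lem:linearization} shows that variations in $(x,\xi)$ do not change $e_1$ or $w_1$ to first order, so the $(x,\xi)$-plane is $Df$-invariant along $\cN$. The tangent space $T\cN$ is invariant because $\cN$ is. Hence the splitting is $Df$-invariant. It depends continuously on $z$, since $A(c)$ depends smoothly on $c$ and $\lambda_\pm(c)$ stay apart.

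\emph{Rate conditions.} By compactness of $I$ we have $\lambda_-(c)\le\lambda:=\max_I\lambda_-<1$ and $\lambda_+(c)\ge\lambda^{-1}$. Along an orbit in $\cN_I$ the value $c$ is conserved, so $Df^k|_{E^s}$ is multiplication by $\lambda_-(c)^k$, and similarly for $E^u$. Meanwhile \eqref{eq:tangent-growth} says tangential growth is at most $C_I(1+|k|)$, which is subexponential. So for any $\mu$ with $\lambda<\mu<1$ there is $k$ with $\lambda^k C_I(1+k)<\mu^k$. This gives the normal hyperbolicity inequalities, and in fact $r$-normal hyperbolicity for every $r$, so all the invariant manifolds we obtain are smooth.

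\emph{Boundary.} This is the main obstacle: $\cN_I$ has boundary where $c\in\partial I$, and the standard theorem is stated for compact manifolds without boundary. We plan to handle it in one of two ways. The first is to note that $c$ is an exact invariant on $\cN$. We then apply the theorem to $\cN_{I'}$ with $I\Subset I'\Subset(0,1)$, modifying $f$ outside a neighbourhood of $\cN_I$ to make $\cN_{I'}$ overflowing or inflowing invariant (Fenichel's version). The second is to cite the version of the theorem for invariant manifolds with boundary. Either route gives a locally invariant $W^s_{\rm loc}(\cN_I)$ of dimension $\dim\cN_I+\dim E^s=2n-3$. It is foliated by stable fibres $W^s(z)$ tangent to $E^s_z$, and a point in $W^s(z)$ converges exponentially, at rate $\lambda^k$, to the orbit of $z$. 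This defines $c(\cdot)$ on $W^s_{\rm loc}$ as the $c$ of the base point, which is smooth because the foliation is.

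\emph{Graph form.} Each fibre $W^s(z)$ is a smooth curve through $z$ tangent to $E^s_z=\{\xi=-\alpha(c)x\}$. Since $x=0$ on $\cN$ and the $x$-component of $E^s$ is nonzero, the fibre can be parametrised by $x$, with the other coordinates ($\xi$ and the $\cN$-coordinates) smooth functions of $x$. In particular $\xi=-\alpha(c(z))x+O(x^2)$. The constant in $O(x^2)$ is uniform because the fibres depend continuously in $C^2$ on the base point over the compact set $\cN_I$. Restricting to $x>0$ gives the half-fibres, and hence \eqref{eq:stable-graph}.
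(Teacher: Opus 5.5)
Your proposal is correct and takes essentially the same route as the paper: you establish normal hyperbolicity from the invariant $(x,\xi)$-block $A(c)$ of Lemma~\ref{lem:linearization} together with the at most linear tangential growth \eqref{eq:tangent-growth}, handle the boundary by passing to a slightly larger interval and modifying $f$ near its boundary (the paper makes $\cN_J$ inflowing and applies Fenichel's theorem to $f^{-1}$), and read off \eqref{eq:stable-graph} from the stable eigendirection $\xi=-\alpha(c)x$. The one minor difference is how $c(z)$ is made well defined: you take it from the stable fibration $W^s(z)$, while the paper argues directly that $c$ drifts by $O(x_k)$ per step with $\sum_k x_k<\infty$, so the orbit converges to a single orbit in $\cN_I$; either argument works.
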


\begin{proof}
Choose a compact interval $J$ such that $I\Subset J\Subset(0,1)$. Then on $\cN_J$, $c$ is bounded below by a positive constant, so $f$ and $f^{-1}$ are $C^\infty$ on a neighborhood of $\cN_J$ in the collision space of $\cM_\infty$.

Note that Lemma~\ref{lem:linearization} and the conservation of $c$ imply that $Df|_{(x,\xi)}=A(c)$ at every impact of a given orbit, so $Df^k|_{(x,\xi)}=A(c)^k$. The stable and unstable eigendirections of $A(c)$  define subbundles \(E^s\) and \(E^u\) over \(\cN_J\). Note also that the compactness of \(J\) implies that the contraction on \(E^s\) and the expansion on \(E^u\) are uniform (at rates $\lambda_s<1<\lambda_u$), and by \eqref{eq:tangent-growth}, $\|D(f^{\pm k}|_{\cN_J})\|$ grows at most linearly in $k$. Thus the exponential rates in the normal directions dominate the at most linear growth in the tangential directions, i.e. \(\cN_J\) is \(\ell\)-normally hyperbolic for every finite \(\ell\). We may modify $f$ near $\partial\cN_J$ (away from $\cN_I$), to make $\cN_J$ inflowing. A stable manifold theorem for normally hyperbolic invariant manifolds~\cite[Theorem 4]{Fenichel} (applied to $f^{-1}$) then gives a
\(C^\infty\) local stable manifold which, restricted to \(\cN_I\), gives the stable manifold
$W^s_{\mathrm{loc}}(\cN_I)$ for $f$. For $z\in W^s_{\mathrm{loc}}(\cN_I)$ with an offset $x_k$ at the $k$th impact, $c$ changes by $O(x_k)$ and the angle changes by $\psi(c_k)+O(x_k)$, and $\sum_k x_k<\infty$ since $x_k\leq C\lambda^k_s$, so the orbit of $z$ converges to a single orbit in $\cN_I$ and $c(z)$ is well defined. Since $E^s$ is one dimensional, $\dim W^s_{\mathrm{loc}}(\cN_I) = \dim \cN_I+1 = 2n-3.$ Finally, the stable eigenvector of $A(c)$ in Lemma~\ref{lem:linearization} has a nonzero $x$ component, which gives \eqref{eq:stable-graph} on \(W^{s,+}_{\mathrm{loc}}(\cN_I)\).
\end{proof}

\begin{proposition}\label{prop:trapping}
For $n\geq3$ and all sufficiently small $a>0$, the mirror $\cM_a$ traps a family of rays of dimension $2n-3$.
\end{proposition}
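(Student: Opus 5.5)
The plan is to obtain the trapped family from the local stable manifold $W^s_{\mathrm{loc}}(\cN_I)$ of Proposition~\ref{prop:stable-manifold}, pushed backward to infinity through an opening of the truncated tube. Fix a compact interval $I\Subset(0,1)$. The argument has three steps.

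\emph{Step 1 (choosing the states).} Pick $\delta>0$ so small that $W^s_{\mathrm{loc}}(\cN_I)$ is defined on the $\delta$-neighborhood of $\cN_I$. Take $a>0$ small compared with $\delta$. Consider the piece
\[
W^{s,+}=\{z\in W^s_{\mathrm{loc}}(\cN_I):\ a/2<x(z)<a\}.
\]
By \eqref{eq:stable-graph} these states have $\xi=-\alpha(c)x+O(x^2)<0$, so they move toward the waist. Their forward orbits under $f$ satisfy $|x_k|+|\xi_k|\le C\lambda_s^k a$. Hence every later impact has $|x_k|<a$, lies in $\cM_a$, and has $c_k$ in a slightly enlarged compact interval $J\Subset(0,1)$. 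This gives the nonsingularity conditions going forward. There are no tangencies, since $c_k$ stays bounded away from $0$. Collisions are not multiple, since each impact has a unique next impact on a smooth surface near the waist. Impacts do not accumulate in finite time, since the flight times are at least $2r\min J-O(a)$. Impacts stay off the boundary $|x|=a$ once $C\lambda_s a<a$, and I can shrink $W^{s,+}$ to the states with $C\lambda_s^k a<a$ for $k\ge1$. So all forward reflections stay in $\cM_a$ and the trajectory stays in a fixed ball $B_R$.

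\emph{Step 2 (backward escape).} For $z\in W^{s,+}$, I follow the incoming line backward from the impact point $(x,\rho(x)e)$ with $x\in(a/2,a)$. The incoming velocity is the reflection of the outgoing velocity $(\xi,w)$. Its $x$-component is $\xi^-=\xi+2\inner{v^-}{\nu}\rho'(x)/(1+\rho'^2)$, which to leading order is $-\alpha(c)x+2c\beta x$.

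The key claim is that, traced backward, this line leaves the tube through the end $x=a$ without hitting $\cM_a$ again. Hitting the tube backward is what happens to a generic line, so the claim needs care. Backward along the incoming ray, $x$ changes at rate $-\xi^-$. I need to know how far it travels in the $y$-directions before $|y|$ returns to $\rho$. That distance is roughly a chord of length $2rc$, during which $x$ moves by about $2rc\,|\xi^-|=O(x)$.

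I would arrange the geometry so the backward chord crosses $x=a$. One way is to check $\xi^-<0$, i.e. $\alpha(c)>2\beta c$. This is a condition on $c$, $r$, $\beta$, equivalent to $\beta/r>3\beta^2c^2$, i.e. $c^2<1/(3r\beta)$. I choose $I$ inside this range. Then the backward ray moves in the direction of increasing $x$. Since $2rc|\xi^-|\approx 2rc(\alpha-2\beta c)x$ is comparable to $x$, I restrict to $x$ close enough to $a$ that the backward chord reaches $|x|=a$ before its next hit of $|y|=\rho$. Past that point there is no mirror, and the line continues to infinity; by compactness of the mirror it does not meet $\cM_a$ again. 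If the constant $2rc(\alpha-2\beta c)$ is too small for this, I would instead also allow the first backward impact to occur and iterate, using $f^{-1}$, which expands $x$ along the unstable direction. Then some finite backward iterate exits with $|x|>a$, and on an open subset of $W^{s,+}$ this exit is transversal.

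\emph{Step 3 (dimension count).} The map from states $z$ in this open subset of $W^{s,+}$ to their incoming lines in $L_n$ is a smooth injective immersion. It is the composition of smooth billiard maps, and states on distinct orbits give distinct lines, since a line determines its first entry. Its image is therefore, after shrinking, an embedded submanifold of dimension $\dim W^s_{\mathrm{loc}}(\cN_I)=2n-3$. Taking $B_R$ containing $\cM_a$, it is a trapped family in the sense of Definition~\ref{def:trapped}.

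The main obstacle is Step 2: certifying that the backward trajectory genuinely escapes through the open end rather than striking $\cM_a$. The first-order sign condition on $\xi^-$ together with the restriction $x$ near $a$ is the first thing I would try.
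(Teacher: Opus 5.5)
Your argument is essentially the paper's proof: its set $S_a=\{z\in W^{s,+}_{\mathrm{loc}}(\cN_I): c(z)\in I^\circ,\ x(z)<a<x(f^{-1}z)\}$ is exactly your condition that the backward chord crosses $x=a$ before the next hit of $|y|=\rho$, and forward confinement comes from the monotonicity $0<x(fz)<x(z)$ on $W^{s,+}_{\mathrm{loc}}(\cN_I)$, which follows from the stable graph $\xi=-\alpha(c)x+O(x^2)$ and should replace your bound $C\lambda_s^k a<a$ (that bound does not give $|x_k|<a$ when $C\lambda_s\ge 1$). One correction: reflection at $x>0$ gives $\xi^+=\xi^-+2\beta c\,x+O(x^2)$, so $\xi^-=-(\alpha(c)+2\beta c)x+O(x^2)<0$ for every $c$ and the backward chord ends at $x(f^{-1}z)=\lambda_+(c)x+O(x^2)$; hence neither your restriction on $I$ (whose algebra would in any case give $c^2<1/(8r\beta)$) nor your fallback iteration is needed.
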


\begin{proof}
Fix \(I\Subset(0,1)\), and let
\(W^{s,+}_{\mathrm{loc}}(\cN_I)\) denote \(W^{s}_{\mathrm{loc}}(\cN_I)\) where $x>0$. Lemma~4.1 and Proposition~5.1 imply that (after shrinking \(W^{s,+}_{\mathrm{loc}}(\cN_I)\) if necessary) $0<x(fz)<x(z)$ for every \(z\in W^{s,+}_{\mathrm{loc}}(\cN_I)\).
For sufficiently small \(a>0\) define 
\[
    S_a
    =
    \left\{
        z\in W^{s,+}_{\mathrm{loc}}(\cN_I):
        c(z)\in I^\circ,\quad
        x(z)<a<x(f^{-1}z)
    \right\}.
\]
of dimension $\dim S_a=\dim W^s_{\mathrm{loc}}(\cN_I)=2n-3$.

Then for \(z\in S_a\), the ray from \(f^{-1}z\) to \(z\) crosses
the plane \(x=a\) through the aperture of \(\cM_a\). Note that when we truncate to $|x|=a$, there is no longer a collision at \(f^{-1}z\), and the trajectory arrives at $z$ from $x=\infty$. Note that for $k\geq0$, $0<x(f^kz)<a$ and by Proposition~5.1, \(f^kz\) converges to \(\cN_I\). Indeed, since $x$ is affine between collisions, the forward trajectory exists solely within \(0<x<a\). One can easily see that the trajectory is nonsingular. The map $\Psi$ from collision states to their incoming oriented lines is a local diffeomorphism, and $\Psi(z_1)=\Psi(z_2)$ implies that $z_1$ and $z_2$ are the first collisions of the same incoming ray, which are necessarily unique, so $\Psi$ is injective on $S_a$ and $\Psi|_{S_{a}}$ is a diffeomorphism onto $\Psi(S_a)$, which is a trapped family of dimension $2n-3$. Hence Theorem~\ref{thm:main} is proved (for $n\geq 3$, $n=2$ is treated in Remark~\ref{rem:planar}).
\end{proof}

\section{A trapped non-normal 2-parameter family of rays in $\R^3$}\label{sec:nonnormal}
For $(u,p)\in L_n$, set $\omega=d(p\cdot du)$. For a family $q(s,t)$, $u(s,t)$ where $q$ is a point on each line and $u$ is its unit direction, $\omega(\partial_s,\partial_t)=q_s\cdot u_t-q_t\cdot u_s.$
\begin{lemma}\label{lem:normal-isotropic}
If a 2-parameter family $U\subset L_3$ is locally normal to a smooth surface, then $\omega|_U=0$.
\end{lemma}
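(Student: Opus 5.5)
Suppose $U$ is locally normal to a smooth surface $\Sigma\subset\R^3$. We parametrize the family by points of $\Sigma$ and compute $\omega$ directly. Concretely, take local coordinates $(s,t)$ on $\Sigma$, write $q(s,t)\in\Sigma$ for the foot point and $u(s,t)$ for the unit normal to $\Sigma$ at $q(s,t)$. Then the line through $q(s,t)$ in direction $u(s,t)$ is the member of $U$ labelled by $(s,t)$. By the formula stated just before the lemma, $\omega(\partial_s,\partial_t)=q_s\cdot u_t-q_t\cdot u_s$ for any choice of point $q$ on each line. The formula does not depend on that choice: replacing $q$ by $q+\tau u$ adds $\tau_s u\cdot u_t+\tau u_s\cdot u_t-\tau_t u\cdot u_s-\tau u_t\cdot u_s$, and this vanishes because $u\cdot u_s=u\cdot u_t=0$ (differentiate $|u|=1$). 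So we may use the foot points on $\Sigma$.

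The key step is the symmetry of the second fundamental form. Since $u$ is normal to $\Sigma$, we have $q_s\cdot u=0$ and $q_t\cdot u=0$ identically. Differentiating the first in $t$ and the second in $s$ gives
\[
q_{st}\cdot u+q_s\cdot u_t=0,\qquad q_{ts}\cdot u+q_t\cdot u_s=0 .
\]
Smoothness gives $q_{st}=q_{ts}$, so subtracting yields $q_s\cdot u_t-q_t\cdot u_s=0$. Hence $\omega|_U=0$ on the parametrized patch.

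There is a small point to check in passing. The map $(s,t)\mapsto(u,p)$ is a local parametrization of $U$, or at least it covers $U$ locally. This holds by the definition of ``locally normal'', so $\omega$ pulled back to $U$ vanishes. If $U$ is normal to $\Sigma$ only in the sense that each line meets $\Sigma$ orthogonally at a point depending smoothly on the line, the same computation applies after reparametrizing. I expect no real obstacle here; the only care needed is this invariance under the choice of $q$, which is what lets us place $q$ on $\Sigma$.
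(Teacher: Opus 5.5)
Your proof is correct and follows the paper's argument: parametrize by the surface, differentiate $u\cdot q_s=u\cdot q_t=0$, and use $q_{st}=q_{ts}$ to get $q_s\cdot u_t=q_t\cdot u_s$. Your extra check that $q_s\cdot u_t-q_t\cdot u_s$ does not depend on the choice of point on each line is a valid supplement the paper leaves implicit; note that the $\tau\,u_s\cdot u_t$ terms cancel by symmetry of the dot product, and only the $\tau_s,\tau_t$ terms need $u\cdot u_s=u\cdot u_t=0$.
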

\begin{proof}
Choose local coordinates $(s,t)$ on the surface, let $q(s,t)$ be its position vector, and let $u(s,t)$ be its unit normal.
Differentiating $u\cdot q_s=u\cdot q_t=0$ and using $q_{st}=q_{ts}$ gives $q_s\cdot u_t=q_t\cdot u_s$, so $\omega(\partial_s,\partial_t)=0$.
\end{proof}
Now take $n=3$, where the billiard at $x=0$ is the billiard in the disk of radius $r$.
With $e(\theta)=(\cos\theta,\sin\theta)$ and $Je(\theta)=(-\sin\theta,\cos\theta)$, at the impact point $q_0(\theta,c)=(0,r e(\theta)),$ note that the direction $u_0(\theta,c)=(0,ce(\theta)+s\,Je(\theta))$
has reflection $(0,-ce(\theta)+s\,Je(\theta))$ in $\cN$,  so $(q_0,u_0)$ gives all incoming lines at $x=0$.
Then we can easily see that $\omega$ is
\begin{equation}\label{eq:central-omega}
q_{0,\theta}\cdot u_{0,c}-q_{0,c}\cdot u_{0,\theta}
=-\frac{rc}{\sqrt{1-c^2}}\ne0.
\end{equation}
\begin{proposition}\label{prop:nonnormal}
For all sufficiently small $a>0$, $\cM_a\subset\R^3$ traps a non-normal 2-parameter family of rays.
\end{proposition}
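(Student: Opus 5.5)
The idea is to pick out, inside the trapped family $\Psi(S_a)$ of Proposition~\ref{prop:trapping} (for $n=3$ it has dimension $3$), a $2$-parameter subfamily on which $\omega$ does not vanish. By Lemma~\ref{lem:normal-isotropic}, such a subfamily cannot be locally normal to any smooth surface. I plan to obtain it by continuity from the central computation \eqref{eq:central-omega}.

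\textbf{Step 1: parametrize a surface in the stable manifold.} By Proposition~\ref{prop:stable-manifold}, $W^s_{\mathrm{loc}}(\cN_I)$ is a smooth $3$-manifold containing $\cN_I$. I will use the coordinates $(\theta,c,x)$, where $x$ is the axial offset and $(\theta,c)$ are the asymptotic parameters of the orbit in $\cN_I$. Using \eqref{eq:stable-graph}, $W^{s,+}_{\mathrm{loc}}$ is a graph $\xi=-\alpha(c)x+O(x^2)$ over these coordinates, and the remaining coordinates $(e,u)$ differ from the central values $(re(\theta),u_0(\theta,c))$ by $O(x)$ in $C^1$. For the subfamily I take the slice $\Sigma_{x_0}=\{z\in W^{s,+}_{\mathrm{loc}}(\cN_I): x(z)=x_0\}$ with $0<x_0<a$, and then adjust it so that it lies in $S_a$. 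A simpler option is $\Sigma=\{z\in S_a: \text{one fixed relation among the coordinates}\}$, for instance the surface $\{x(z)=x_0\}\cap S_a$. Since $x(f^{-1}z)>a>x(z)$ is an open condition, $\Sigma$ is a nonempty open piece of a smooth surface once $x_0$ is chosen slightly below $a$ and the phase is chosen suitably.

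\textbf{Step 2: estimate $\omega$ on $\Sigma$.} The incoming line of $z\in\Sigma$ is given by the impact point $q(\theta,c)=(x_0,\rho(x_0)e)$ together with the incoming direction. In $C^1$ these are $q_0+O(a)$ and $u_0+O(a)$, because $W^s_{\mathrm{loc}}$ is $C^\infty$ and the reflection law depends smoothly on the state. Since $\omega(\partial_\theta,\partial_c)=q_\theta\cdot u_c-q_c\cdot u_\theta$ is bilinear in first derivatives, it equals $-rc/\sqrt{1-c^2}+O(a)$, uniformly for $c\in I$. For $a$ small this is bounded away from zero, because $c\ge\min I>0$. The family $\Psi(\Sigma)$ is an embedded surface in the trapped family, since $\Psi$ is a diffeomorphism on $S_a$. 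So $\omega|_{\Psi(\Sigma)}\neq0$ everywhere, and Lemma~\ref{lem:normal-isotropic} shows that $\Psi(\Sigma)$ is not locally normal to any surface.

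\textbf{Main obstacle.} The hardest part will be making Step~2 uniform. The $C^1$ closeness of $W^s_{\mathrm{loc}}$ to $\cN_I$ in the $(\theta,c)$ directions needs care, because the asymptotic parameters $(\theta,c)(z)$ are defined through a limit. I would first try to use the $C^1$ foliation of $W^s_{\mathrm{loc}}(\cN_I)$ by stable fibers given by Fenichel's theorem, with basepoint map $\pi$. Then I would parametrize $\Sigma$ by $\pi$ and invoke the fact that the fibers are $C^1$-close to the normal fibers near $\cN_I$. If that is awkward, a fallback is to parametrize $\Sigma$ directly by the impact coordinates $(\theta,c)$ at height $x_0$ and note that $\omega$ depends continuously, in $C^1$, on $x_0\to0$, where it reduces to \eqref{eq:central-omega}.
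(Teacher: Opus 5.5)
Your proposal is correct and follows essentially the paper's route: the paper also takes the slice $\{x=\eta\}$ of $W^{s,+}_{\mathrm{loc}}(\cN_I)$ over a chart $P\subset\cN_I$ with coordinates $(\theta,c)$, gets $\omega\neq0$ on its image for small $\eta$ by smooth dependence on $\eta$ and \eqref{eq:central-omega}, and concludes with Lemma~\ref{lem:normal-isotropic}. The one point you leave vague is why the slice actually meets $S_a$, since openness of $x(f^{-1}z)>a>x(z)$ does not give nonemptiness; the paper settles it by taking $\eta=\vartheta a$ with $\bigl(\min_{c\in I}\lambda_+(c)\bigr)^{-1}<\vartheta<1$, so that $x(f^{-1}z)=\lambda_+(c)\vartheta a+O(a^2)>a$ on the whole slice, with no choice of phase needed.
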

\begin{proof}
Fix $I\Subset(0,1)$ and a chart $P\Subset\cN_I^\circ$ with coordinates $(\theta,c)$. Let
\[
\Lambda=\min_{c\in I}\lambda_+(c)>1,
\quad
\Lambda^{-1}<\vartheta<1,
\quad
\eta=\vartheta a.
\]
For small $\eta\ge0$, let $j_\eta:P\longrightarrow W^s_{\mathrm{loc}}(\cN_I)$ send each $z$ to the point of the stable manifold over $z$ with $x=\eta$. Then $j_0$ is the inclusion of $P$ and $\xi=-\alpha(c)\eta+O(\eta^2).$ Set $G_\eta=\Psi\circ j_\eta:P\longrightarrow L_3$
and note that because $G_\eta$ is a composition of smooth functions, \eqref{eq:central-omega} implies that on P, $G_\eta^*\omega$ is nowhere zero for all sufficiently small $\eta\ge0$.
We then shrink $P$ so that $G_\eta$ is an embedding whose image is non-normal by Lemma~\ref{lem:normal-isotropic}. For all sufficiently small $a>0$, the proof of Proposition~\ref{prop:trapping} applies to $j_{\vartheta a}(P)$, since
\[
0<\vartheta a<a,
\quad
x(f^{-1}j_{\vartheta a}(z))
=\lambda_+(c)\vartheta a+O(a^2)>a.
\]
Thus every ray in $G_{\vartheta a}$ enters the trap through the aperture and is trapped, hence Theorem~\ref{thm:nonnormal} is proved.
\end{proof}

\begin{remark}
More simply, the existence of a trapped non-normal 2-parameter family follows when we note that the trapped family $\Psi(S_a)$ of Proposition~\ref{prop:trapping} is a hypersurface in $L_3$, so in suitable local coordinates $(q_1,p_1,q_2,p_2)$ we have $\omega=dp_1\wedge dq_1+dp_2\wedge dq_2$ and $\Psi(S_a)$ is given by $\{p_2=0\}$. Then $\{p_2=q_2=0\}$ is a trapped 2-parameter family on which $\omega|_{\{p_2=q_2=0\}}=dp_1\wedge dq_1\neq0$, so by
Lemma~\ref{lem:normal-isotropic} it is not locally normal to any smooth surface.
\end{remark}

\begin{remark}\label{rem:planar}
We can trap a 1-parameter family in $n=2$ in $y=\pm\rho(x),\quad |x|\leq a$ by applying the computation of Lemma~\ref{lem:linearization} at $c=1$ (the vertical line at $x=0$), and noting that $\det A(1)=1$ and $\mathrm{tr} (A(1))>2$ making the orbit hyperbolic, with a one dimensional stable manifold amenable to the argument given in Proposition~\ref{prop:trapping}, which yields a trapped family of dimension $1=2n-3.$ To first order and for a trap with a parabolic profile, we can trap a ray if we aim it at the points $(0,\pm\sqrt{r^2+\frac{r}{\beta}})$. Indeed, note (where $\alpha=\alpha(1)$) that reflecting back the trapped direction $\xi^+=-\alpha x+O(x^3)$ at $(x,\rho(x))$ yields $\xi^-=-(\alpha+2\beta)x+O(x^3)$, so the line representing the initial trapped ray crosses $x=0$ at height $r+1/(\alpha+2\beta)+O(x^2)=\sqrt{r^2+\frac{r}{\beta}}+O(x^2)$.
\end{remark}

\section*{Acknowledgments}
The author would like to thank Serge Tabachnikov for his correspondence.

\end{document}